\newlength{\hchng}
\newlength{\vchng}
\newtheorem{thm}{Theorem}[section]
\newtheorem{prop}[thm]{Proposition}
\newtheorem{lemma}[thm]{Lemma}
\newtheorem{preremark}[thm]{Remark}
\numberwithin{equation}{section}
\newcommand{\norm}[1]{\left\Vert#1\right\Vert}
\newcommand{\abs}[1]{\left\vert#1\right\vert}
\newcommand{\R}{\mathbb R}
\newcommand{\eps}{\varepsilon}
\newcommand{\grad} {\nabla}
\newcommand{\lap} {\triangle}
\newcommand{\dx} {\; \mathrm{d} x}
\newcommand{\dd} {\; \mathrm{d}}
\DeclareMathOperator*{\osc}{osc}
\DeclareMathOperator{\dv}{div}
\newcommand{\fl}[1] {(-\Delta)^{#1/2}}
\newcommand{\Q}{Q}
\title{Eventual regularization for the slightly supercritical quasi-geostrophic equation}
\author{Luis Silvestre}
\begin{document}
\maketitle

\begin{abstract}
We prove that weak solutions of a slightly supercritical quasi-geostrophic equation become smooth for large time. We prove it using a De Giorgi type argument using ideas from a recent paper by Caffarelli and Vasseur.
\end{abstract}


\section{Introduction}
We consider the quasi-geostrophic equation for a function $\theta : \R^2 \times [0,+\infty) \to \R$,
\begin{equation} \label{e:equation}
\begin{aligned}
\partial_t \theta(x,t) + w \cdot \grad \theta(x,t) + \fl{\alpha} \theta(x,t) &= 0\\
\theta(x,0) &= \theta_0(x)
\end{aligned}
\end{equation}

Where $\fl{\alpha} \theta = \Lambda^{\alpha} \theta$ is the fractional laplacian in the $x$ variable and $w = (-R_2 \theta, R_1 \theta) = R^\bot \theta$ where $R_i$ are the Riesz transforms
\[ R_i \theta(x) = c \mathrm{PV} \int_{\R^2} \frac{(y_i-x_i)\theta(y)}{|y-x|^3} \dd y \]

When $\alpha>1$ it is said that the equation is subcritical, and it is well known \cite{MR1709781} that solutions are smooth. In the critical case $\alpha=1$, smoothness of the solutions has been proved recently in \cite{CV2007} and \cite{kiselev2007gwp}. 

Even though both in \cite{kiselev2007gwp} and \cite{CV2007}, they obtain the global well posedness of the critical quasi-geostrophic equation, a closer look at the results and proofs reveals that they are quite different in nature. The proof in \cite{kiselev2007gwp} is certainly simpler than the one in \cite{CV2007}. The result in \cite{kiselev2007gwp} says that certain cleverly constructed modulus of continuity are preserved by the flow of the equation. In \cite{CV2007} a regularization technique inspired by De Giorgi's methods for elliptic PDEs is used to exploit the regularization effect of the equation. Thus even with $L^2$ initial data, the methods in \cite{CS2007} show that the solutions become immediately smooth.

In \cite{CV2007} the full structure of the nonlinearity in \eqref{e:equation} is not used. Their result is somewhat more general. The purpose of this paper is to use the methods of \cite{CV2007} exploiting the exact structure of the nonlinear term in \eqref{e:equation} and obtain a regularity result for the slightly supercritical case. The idea is to iteratively show that the oscillation of the function $\theta$ improves as we look at smaller parabolic cylinders, and use that information to get better local estimates for the nonlinear term $w \cdot \grad \theta$. As it is standard, this improvement of oscillation in smaller cylinders leads to a H\"older continuity result. In order to compensate for the nonlocal dependence of $w$ with respect to $\theta$, we need to make a change of variables in each iterative step that \emph{follows the flow} of the nonlocal contribution. Unfortunately this procedure works only at points $(x,t)$ if $t$ is not too small. So our result is not an immediate regularization, but instead an eventual regularization. More precisely, we prove that if $\alpha = 1-\eps$ with $\eps \ll 1$, then for any initial data $\theta_0$, there is a time $t_0$ after which the solution $\theta$ becomes smooth. This has been well known for critical QG equations for some years \cite{MR2084005} and also for many other equations (for instance Navier-Stokes), but up to our knowledge it is new for the supercritical quasi-geostrophic equation.

Our main results are
\begin{thm} \label{t:main}
Let $\theta$ be a solution of the quasi-geostrophic equation \eqref{e:equation} with initial data $\theta_0$ in $L^2$. Assume that $\alpha=1-\eps$ with $\eps \leq \delta$.
Then for any $T>0$, $\theta$ is $\delta$-H\"older continuous at time $T$ if $\delta$ is small enough. Moreover, there is an estimate
\begin{align*}
|\theta(x,T) - \theta(y,T)| \leq C |x-y|^\delta
\end{align*}
where $C$ and $\delta$ depend on $\norm{\theta_0}_{L^2}$ and $T$. 
\end{thm}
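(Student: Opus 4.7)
The strategy is to establish H\"older regularity at a given time $T$ by iteratively reducing the oscillation of $\theta$ on a geometric sequence of parabolic cylinders, following the De Giorgi scheme of \cite{CV2007} but inserting a Galilean change of variables at every step to compensate for the nonlocal drift in the slightly supercritical regime. The first preparatory step is to upgrade $\theta$ from $L^2$ to $L^\infty$: the energy-type ``first De Giorgi lemma'' of \cite{CV2007} is robust under small perturbations of $\alpha$ away from $1$ and yields $\norm{\theta(\cdot,t)}_{L^\infty} \leq C(\norm{\theta_0}_{L^2}, t)$ for every $t > 0$. After this we may renormalize so that $|\theta| \leq 1/2$ and set up the iteration on $\R^2 \times [t_0, T]$.

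The local core is an oscillation decay lemma: there exist absolute $r_0, \eta \in (0,1)$ such that if $\osc_{Q_1} \theta \leq 1$ and the drift $w$ is sufficiently small on $Q_1$ (say in $L^\infty$ or a Morrey-type norm), then $\osc_{Q_{r_0}} \theta \leq 1-\eta$. This is obtained by combining the second De Giorgi lemma, controlling intermediate level sets by an energy/isoperimetric argument, with propagation of smallness from the first lemma, applied to either $\theta$ or $-\theta$. Iterating requires rescaling $\tilde\theta(x,t) = (1-\eta)^{-k}\theta(x_k + r_0^k x, t_k + r_0^{k\alpha} t)$, which multiplies the drift by a factor of order $(1-\eta)^{-k} r_0^{k(\alpha-1)}$ that blows up when $\alpha<1$. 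The remedy is to pick the spatial center $x_k$ to track the flow: at each scale, shift $x_k$ by a running average $\bar w_k$ of $w$ over the previous cylinder, so that the rescaled equation sees only $w - \bar w_k$.

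The key point is that after this Galilean correction what enters the next application of the oscillation lemma is not $\norm{w}_{L^\infty}$ but the \emph{oscillation} of $w$ on the cylinder. Since $w = R^\bot\theta$ and Riesz transforms act boundedly on H\"older spaces, $\osc w$ on $Q_{r_0^k}$ can be bounded by the current H\"older seminorm of $\theta$ plus a rapidly decaying tail from the far field. A geometric summation then shows that the iteration closes precisely when the implicit H\"older exponent $\delta = -\log(1-\eta)/\log(1/r_0)$ satisfies $\delta \geq \eps = 1-\alpha$, which is the hypothesis $\eps \leq \delta$ of the theorem. The base time $t_k = T - \sum_{j \leq k} r_0^{j\alpha}$ must remain above $t_0$; since the geometric series converges, any $T$ beyond a threshold $T_0(\norm{\theta_0}_{L^2})$ works, which is the source of the ``eventual'' aspect of the result.

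I expect the hardest step to be making the drift decomposition quantitative uniformly across all scales. One must split $w = (w - \bar w_k) + \bar w_k$ while simultaneously accounting for the contribution of $\theta$ outside the local cylinder, which enters as a smooth but slowly-decaying tail in the Riesz integral. Controlling this tail so that it neither destroys the smallness of $w - \bar w_k$ nor prevents the accumulated Galilean shifts $\sum \bar w_k r_0^{k\alpha}$ from converging is the main technical hurdle, and is precisely where the exact structure $w = R^\bot\theta$ (as opposed to a generic bounded divergence-free drift as in \cite{CV2007}) becomes essential.
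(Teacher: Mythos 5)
Your overall strategy is exactly the paper's: De Giorgi iteration on nested parabolic cylinders, rescaling $\theta$ by $\rho^{-\delta}$ at each step, using the improvement-of-oscillation lemma from \cite{CV2007} (adapted to the weight $z^\eps$), and inserting at every step a Galilean change of variables that \emph{follows the flow} of the far-field part of the drift $w=R^\perp\theta$. You also correctly identify that the constraint $\delta\geq\eps$ is what makes the drift coefficient stay bounded under rescaling, and that controlling the tail of the Riesz integral is the hard technical step.

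That said, there are some inaccuracies you would have to fix before the argument closes. First, your scaling bookkeeping has a sign error: rescaling by amplitude $(1-\eta)^{-1}$ and space $r_0$ (time $r_0^\alpha$) multiplies the drift coefficient in the equation by $(1-\eta)\,r_0^{\alpha-1}\approx\rho^{\delta-\eps}$ per step, not $(1-\eta)^{-1}r_0^{\alpha-1}$; this is borderline bounded when $\delta\geq\eps$, not ``blowing up.'' The Galilean correction is not there to tame this factor; it is there because the far-field contribution of $\theta_k$ to $w$ near the origin is a large (roughly $|\log\rho|$-sized) constant that would otherwise push the parabolic cylinder out of the domain where the oscillation bound applies. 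Second, and more substantially, you do not formulate the inductive invariant that makes the tail control possible: the paper carries forward a quantitative growth bound $|\theta_k(x,t)|\leq 2|x|^{2\delta}$ for $|x|>1$, and verifying that this survives the rescaling is a nontrivial arithmetic step (it forces further smallness constraints on $\rho$ and $\delta$ and fixes the precise decomposition $w=w_1+w_2+w_3$). Without spelling this out, your statement that ``$\osc w$ on $Q_{r_0^k}$ can be bounded by the current H\"older seminorm of $\theta$ plus a rapidly decaying tail'' is a gesture rather than a lemma. Third, your explanation of the time threshold is off: the paper anchors all cylinders $Q_{r_k}$ at $t=T$ so they are nested and never exit $[t_0,T]$; the threshold $T_0$ in Theorem \ref{t:main2} comes from waiting for $\|\theta\|_{L^\infty}$ and the tail $\int_{|x|>1}|\theta|/|x|^2$ to become $O(1)$ (via Theorem \ref{t:l2tolinf} and Lemma \ref{l:tails}), so that the normalized constant $M$ and hence $\rho,\eta,\delta$ become universal. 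For Theorem \ref{t:main} itself, any $T>0$ works once $T$-dependent constants are allowed.
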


\begin{thm} \label{t:main2}
If $\eps$ is small enough, for any $\theta_0 \in L^2(\R^2)$, there is a $T_0$ such that the solution $\theta$ of \eqref{e:equation} is $C^\infty$ for $t>T_0$ ($T_0$ depends only on $\eps$ and $\norm{\theta_0}_{L^2}$, and $T_0 \to 0$ as $\eps \to 0$)
\end{thm}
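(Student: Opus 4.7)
The plan is to use Theorem~\ref{t:main} as the starting point and bootstrap the H\"older regularity at positive times all the way up to $C^\infty$ smoothness. Fix an arbitrary $T_0>0$. Theorem~\ref{t:main} provides $\theta(\cdot,T_0)\in C^\delta(\R^2)$ for every $\delta\geq\eps$ small enough, with H\"older norm depending only on $\norm{\theta_0}_{L^2}$ and $T_0$. Choosing $\delta$ slightly larger than $\eps$ gives the crucial strict inequality $\delta>1-\alpha$: the H\"older regularity of $\theta$ exceeds the criticality defect, so from time $T_0$ onward the equation is ``effectively subcritical.''

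Reinterpret \eqref{e:equation} as a Cauchy problem at time $T_0$ with H\"older-continuous initial data. The drift $w=R^\bot\theta$ is not a priori globally H\"older, since the Riesz transform is unbounded on $L^\infty$, but a localization argument recovers the needed local control. Write $\theta=\theta\chi+\theta(1-\chi)$ for a smooth cutoff $\chi$ supported on a large ball: apply Calder\'on--Zygmund theory in H\"older spaces to $R^\bot(\theta\chi)$, and bound $R^\bot(\theta(1-\chi))$ and its derivatives on the fixed ball of interest using decay of the Riesz kernel together with the $L^2$ estimate. The resulting drift is locally $C^\delta$ up to a smooth, controlled far-field remainder.

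With $\delta>1-\alpha$, apply a Schauder-type regularity estimate for fractional parabolic equations with H\"older drift: one iteration produces $\theta\in C^{\alpha+\delta-\eta}_{\mathrm{loc}}$ for any small $\eta>0$, and since $\alpha+\delta>1$ this lands in $C^{1,\gamma}_{\mathrm{loc}}$ for some $\gamma>0$. The drift then gains a derivative, the equation is differentiated, and the argument iterates to yield $\theta\in C^\infty$ on $(T_0,\infty)\times\R^2$. Since $T_0>0$ was arbitrary, $\theta$ is smooth for every $t>0$; the quantitative claim $T_0\to 0$ as $\eps\to 0$ must be extracted by tracking how the constants in Theorem~\ref{t:main} degenerate as $\eps$ decreases.

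The main obstacle is setting up a clean Schauder-type bootstrap for the nonlocal drift-diffusion equation that tolerates the far-field tail of the Riesz transform; once the first gain $C^\delta\to C^{1,\gamma}$ is established, the subsequent iteration is essentially mechanical, since each step passes H\"older regularity from $\theta$ to $w$ through standard Calder\'on--Zygmund theory. A secondary, more quantitative obstacle is showing $T_0\to 0$ as $\eps\to 0$: this requires revisiting the De Giorgi oscillation-improvement and change-of-variables steps used to prove Theorem~\ref{t:main} and keeping sharp track of how the H\"older constants and the waiting time depend on the criticality defect $\eps$.
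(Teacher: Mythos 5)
Your high-level structure---apply Theorem~\ref{t:main} to get $C^\delta$ regularity with $\delta>1-\alpha$, then bootstrap to $C^\infty$---is the same as the paper's, and the observation that $\delta>1-\alpha$ renders the equation effectively subcritical is exactly the right mechanism. However, there is a genuine gap at the very first step: you cannot fix $T_0>0$ arbitrary. Theorem~\ref{t:main} requires $\eps\leq\delta$, and the admissible range of $\delta$ is capped above by a threshold that depends on $\norm{\theta_0}_{L^2}$ and on $T$; for small $T$ this threshold can drop below $\eps$, in which case no admissible $\delta$ exists and the theorem simply does not apply. This is precisely why the result is \emph{eventual}, not immediate, regularization. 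Your closing sentence ``since $T_0>0$ was arbitrary, $\theta$ is smooth for every $t>0$'' would upgrade the theorem to immediate regularization, which is strictly stronger than what Theorem~\ref{t:main2} claims and is not established by these methods.

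The paper's actual argument is shorter and resolves the quantifier issue cleanly. It first uses Theorem~\ref{t:l2tolinf} and Lemma~\ref{l:tails} to pick $t_0$ (depending on $\norm{\theta_0}_{L^2}$) so that $\norm{\theta(\cdot,t_0)}_{L^\infty}\leq 1$ and the tail integral in \eqref{e:tails} is $\leq 1$. From that time on the problem is in the normalized form used in the proof of Theorem~\ref{t:main}, so the constants $\rho$, $\eta$, $\delta$ there become universal---independent of $\theta_0$---and the condition $\eps\leq\delta$ becomes a condition on $\eps$ alone, as required by the quantifiers in Theorem~\ref{t:main2}. Applying Theorem~\ref{t:main} with $T=1$ starting from $t_0$ then gives $C^\delta$ regularity for all $t\geq t_0+1$, so one takes $T_0=t_0+1$. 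For the passage from $C^\delta$ (with $\delta>1-\alpha$) to $C^\infty$ the paper simply cites the higher-regularity result of \cite{constantin2007rhc}, whereas you propose rebuilding the Schauder-type bootstrap by hand, including the localization needed to control the far-field part of the Riesz transform. That is a legitimate alternative route, but as you yourself note it is the hardest part to make rigorous, and you leave it as a sketch; citing the known result is the efficient move and is what the paper does.
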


The most common way to prove eventual regularity for some equation is by combining a global regularity result for small initial data with an appropriate decay of the weak solution with respect to time. We point out that our proof of Theorem \ref{t:main2} is essentially different. Even though the decay of the $L^\infty$ norm is used in the proof, after the $L^\infty$ norm is under control we still need to wait an extra period of time to obtain regularity. Our proof is not based on a perturbative argument of the critical case either.

By a \emph{solution} of \eqref{e:equation}, we mean a weak solution $\theta$ (a solution in the sense of distributions) for which the following level set energy inequality holds
\begin{equation} \label{e:energyinequality}
\int_{\R^n} \theta_\lambda^2(x,t_2) \dx + 2 \int_{t_1}^{t_2} \norm{\theta_\lambda}_{\dot H^\alpha}^2 \dd t \leq \int_{\R^n} \theta_\lambda^2(x,t_1) \dx .
\end{equation} 
where $\theta_\lambda = (\theta - \lambda)_+$ and $\norm{.}_{\dot H^\alpha}$ stands for the homogeneous Sobolev norm 
\[ \norm{f}_{\dot H^\alpha}^2 = \int |\hat f(\xi)|^2 |\xi|^{2 \alpha} \dd \xi = c \iint \frac{|f(x)-f(y)|^2}{|x-y|^{2+2\alpha}} \dd x \dd y .\]
It can be shown that such solutions exist for any initial data $\theta_0$ by adding a vanishing viscosity term $\nu \lap \theta$ to the right hand side and making $\nu \to 0$ (See \cite{MR2084005} and also the appendix in \cite{CV2007}).

The methods in this paper do not require essentially the dimension to be $2$. The same result would hold if $\theta : \R^n \to \R$ and $w = T\theta$ for some singular integral operator $T$ of order zero such that $T \theta$ is divergence free and the kernel associated to $T$ is differentiable away from the origin.

\section{Preliminaries}
In this section we review some results and constructions which are mostly adaptations from \cite{CV2007}.

\subsection*{$L^2$ and $L^\infty$ estimates.}

\begin{thm} \label{t:l2decreasing}
If $\theta$ is a solution of \eqref{e:equation} then $\norm{\theta}_{L^2}$ is decreasing in time. More precisely
\[ \norm{\theta(.,t)}_{L^2(\R^2)} \leq \norm{\theta_0}_{L^2(\R^2)} \]
\end{thm}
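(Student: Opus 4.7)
The plan is to extract both the $L^2$ norms of the positive and negative parts of $\theta$ from the level set energy inequality \eqref{e:energyinequality}, exploiting the fact that the nonlinearity in \eqref{e:equation} is odd under $\theta \mapsto -\theta$.

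First I would take $\lambda = 0$ in \eqref{e:energyinequality}. Since $\theta_0 = \theta_+ = (\theta)_+$, dropping the nonnegative dissipation term yields
\[
\int_{\R^2} \theta_+^2(x,t_2) \dx \leq \int_{\R^2} \theta_+^2(x,t_1) \dx,
\]
so the $L^2$ norm of the positive part is monotone nonincreasing in time.

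Next I would show that $\tilde\theta := -\theta$ is also a weak solution of \eqref{e:equation}, so the same conclusion applies to its positive part, i.e.\ to $\theta_- = (-\theta)_+$. The PDE is invariant under the sign change because the velocity $w = R^\bot \theta$ is linear in $\theta$ while the advection term $w\cdot\grad\theta$ is quadratic: if $\tilde w = R^\bot\tilde\theta = -w$, then
\[
\partial_t \tilde\theta + \tilde w \cdot \grad \tilde\theta + \fl{\alpha}\tilde\theta
= -\bigl(\partial_t \theta + w\cdot\grad\theta + \fl{\alpha}\theta\bigr) = 0,
\]
and the $\fl{\alpha}$ term and Riesz transform are linear, so distributional solutions are preserved. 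The level set energy inequality for $\tilde\theta$ can be inherited from the vanishing viscosity construction (the viscous approximants $\theta^\nu$ come with negatives $-\theta^\nu$ that are themselves viscous approximants of $\tilde\theta$, and the energy inequality passes to the limit). Applying \eqref{e:energyinequality} to $\tilde\theta$ with $\lambda = 0$ gives
\[
\int_{\R^2} \theta_-^2(x,t_2) \dx \leq \int_{\R^2} \theta_-^2(x,t_1) \dx.
\]

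Adding the two inequalities and using $\theta^2 = \theta_+^2 + \theta_-^2$ gives
\[
\norm{\theta(\cdot,t_2)}_{L^2}^2 \leq \norm{\theta(\cdot,t_1)}_{L^2}^2,
\]
and specializing $t_1 = 0$ yields the stated bound. The only subtle point is justifying that $-\theta$ qualifies as a \emph{solution} in the paper's sense, i.e.\ that the level set energy inequality for $-\theta$ holds; this is not a deep issue but is the one place where care is needed, since \eqref{e:energyinequality} is not obviously symmetric in the sign of $\theta$ when stated only for $(\theta-\lambda)_+$. Everything else is an immediate consequence of dropping a nonnegative dissipative term.
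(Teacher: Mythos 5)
Your strategy of writing $\theta^2 = \theta_+^2 + \theta_-^2$ and controlling each piece from a level set inequality at $\lambda = 0$ is the natural way to get the theorem from \eqref{e:energyinequality}, and the paper itself only remarks that the claim ``could be derived directly from the energy inequality'' without displaying a proof. However, the symmetry you invoke fails, and it is an algebra error rather than a subtlety: both $w = R^\bot\theta$ and $\grad\theta$ flip sign under $\theta\mapsto-\theta$, so the advection term is \emph{even}, not odd,
\[
\tilde w\cdot\grad\tilde\theta = (-w)\cdot(-\grad\theta) = +\,w\cdot\grad\theta,
\]
and hence $\partial_t\tilde\theta + \tilde w\cdot\grad\tilde\theta + \fl{\alpha}\tilde\theta = -\partial_t\theta + w\cdot\grad\theta - \fl{\alpha}\theta = 2\,w\cdot\grad\theta$, which is not zero in general. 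So $-\theta$ is \emph{not} a solution of \eqref{e:equation}; it is transported by the velocity $-w = R^\bot\theta$ rather than by $R^\bot(-\theta)$. For the same reason $-\theta^\nu$ is not a viscous approximant of $-\theta$ in your sense, and the level set energy inequality for $\theta_- = (-\theta)_+$ does not follow by this route.

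The intended conclusion is correct and there are two standard repairs. The first, which I take to be what ``derived directly from the energy inequality'' means, is to note that testing the viscous approximating equation against $(\lambda - \theta)_+$ rather than $(\theta-\lambda)_+$ yields, by the identical computation (the advection term still integrates to zero since $\dv w = 0$, and the diffusion contribution keeps the right sign), the companion level set inequality for $(\theta-\lambda)_-$. This companion inequality is part of the suitable weak solution apparatus anyway---the De Giorgi argument behind Theorem \ref{t:improvementofoscilation} needs both truncations to bound the oscillation from above and below---and taking $\lambda = 0$ in it gives your bound on $\int\theta_-^2$, after which the summation finishes the proof. Alternatively, if you want to keep a symmetry argument, the correct invariance of the QG nonlinearity combines the sign change with a spatial reflection: with $Sx := (x_1,-x_2)$, a direct computation with the Riesz kernel shows $\tilde\theta(x,t) := -\theta(Sx,t)$ does solve \eqref{e:equation} (the reflection flips the sign of one component of $R^\bot\theta$ and of $\grad\theta$ in exactly the way needed to restore odd parity of the advection term), and $\int (\tilde\theta)_+^2 = \int \theta_-^2$ since $S$ preserves Lebesgue measure. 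Everything else in your proposal then goes through unchanged; the only false step is the assertion that $-\theta$ itself solves \eqref{e:equation}.
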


The theorem above is well known and could be derived directly from the energy inequality.

The following interesting theorem is an adaptation of a result from \cite{CV2007}.

\begin{thm} \label{t:l2tolinf}
If $\theta$ is a solution of \eqref{e:equation} then
\[ \sup_{x \in \R^2} |\theta(x,t)| \leq C t^{-\frac{1}{\alpha}} \norm{\theta(x,0)}_{L^2} \]
\end{thm}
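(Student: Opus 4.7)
The plan is a De Giorgi-type $L^2 \to L^\infty$ iteration closely patterned on \cite{CV2007}. An energy inequality entirely analogous to \eqref{e:energyinequality} holds with $\theta_\lambda$ replaced by $(\lambda - \theta)_+$ (only the divergence-freeness of $w$ is used in its derivation), so it suffices to produce $M \sim \norm{\theta_0}_{L^2} T^{-1/\alpha}$ such that $\theta(x,\tau) \leq M$ almost everywhere on $\R^2 \times [T,\infty)$; the corresponding lower bound follows by the same argument applied to $-\theta$.

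Fix $T > 0$, and introduce the level and time sequences $\lambda_k := M(1 - 2^{-k})$, $t_k := T(1 - 2^{-k})$, together with the truncations $\theta_k := (\theta - \lambda_k)_+$. Using \eqref{e:energyinequality} at level $\lambda_k$ with lower endpoint $t_k$, one finds that
\[
U_k := \sup_{\tau \geq t_k} \int_{\R^2} \theta_k^2(x,\tau) \dx + \int_{t_k}^{\infty} \norm{\theta_k(\cdot,s)}_{\dot H^\alpha}^2 \dd s
\]
obeys $U_k \leq \tfrac{3}{2} \int_{\R^2} \theta_k^2(x, t_k) \dx$; in particular $U_0 \leq \tfrac{3}{2}\norm{\theta_0}_{L^2}^2$.

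The crux of the argument is a nonlinear recurrence $U_{k+1} \leq C\, B^k (T M^\gamma)^{-1} U_k^{1+\delta}$ with explicit positive $B$, $\gamma$, $\delta$. I plan to derive it in three steps. First, Sobolev embedding of $\dot H^\alpha$ (in the convention used in \eqref{e:energyinequality}) in dimension two, combined with interpolation against the time-supremum of $\norm{\theta_k}_{L^2}$, produces a parabolic Gagliardo--Nirenberg bound
\[
\int_{t_k}^{\infty} \int_{\R^2} \theta_k^{p_0} \dx \dd s \;\lesssim\; U_k^{p_0/2}
\]
for a specific $p_0 > 2$ depending on $\alpha$. Second, the level-set inclusion $\{\theta_{k+1} > 0\} \subset \{\theta_k \geq M 2^{-k-1}\}$ and Chebyshev's inequality give the pointwise-in-time bound $\int \theta_{k+1}^2 \dx \leq (M 2^{-k-1})^{-(p_0 - 2)} \int \theta_k^{p_0} \dx$. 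Third, averaging this bound over $s \in [t_k, t_{k+1}]$ (an interval of length $T 2^{-k-1}$) and using the energy inequality at level $\lambda_{k+1}$ to propagate the bound forward to time $t_{k+1}$ yields the claimed recurrence with $\gamma = p_0 - 2$ and $\delta = (p_0 - 2)/2$.

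The classical De Giorgi lemma for nonlinear recurrences then implies $U_k \to 0$ once $U_0$ is sufficiently small relative to $T M^\gamma$, and a direct computation translates this condition into the requirement $M \gtrsim \norm{\theta_0}_{L^2} T^{-1/\alpha}$. Since $U_k \to 0$ forces $\theta_k \to 0$ and hence $\theta \leq M$ almost everywhere on $\R^2 \times [T,\infty)$, the theorem follows. The only delicate point I anticipate is the first step: the exponent $p_0$ must be picked from Gagliardo--Nirenberg in exactly such a way that the final $T$-exponent agrees with $1/\alpha$ (as dictated by the scale-invariance of the equation), rather than a weaker rate.
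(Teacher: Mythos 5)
Your proposal is correct and follows essentially the same route as the paper: the paper does not write out a proof but simply defers to the De Giorgi $L^2\to L^\infty$ iteration of Caffarelli--Vasseur \cite{CV2007}, which is exactly the scheme you lay out (geometric truncation levels $\lambda_k$, geometric time shifts $t_k$, Sobolev embedding of the dissipation combined with the $L^\infty_tL^2_x$ bound via parabolic interpolation, Chebyshev for the level-set gain, pigeonhole to find a good starting time, and the nonlinear recurrence lemma). The exponent bookkeeping also checks out: with $n=2$ and dissipation in $\dot H^{\alpha/2}$ one gets $p_0 = 2+\alpha$, hence $\gamma = \alpha$, $\delta = \alpha/2$, and the smallness condition $U_0 \lesssim (TM^\gamma)^{1/\delta}$ reduces precisely to $M \gtrsim \norm{\theta_0}_{L^2}\, T^{-1/\alpha}$, as required by scale invariance.
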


The proof of Theorem \ref{t:l2tolinf} relies only on the energy inequality \eqref{e:energyinequality}. The proof of Theorem \ref{t:l2tolinf} was given in \cite{CV2007}. It was written for the case $\alpha = 1$, but the proof is general.

\subsection*{The extension problem}
It is useful to define the fractional laplacian $\fl{\alpha}$ using the the extension to the upper half space as in \cite{CS2007}. Given the function $\theta(x,t)$, we extend it to a new variable $z$ to obtain the unique function (that we still call $\theta$) $\theta(x,z,t)$ satisfying the equation
\[ \dv z^\eps \grad \theta = 0 \qquad \text{where } z > 0 \]
where $\grad \theta$ refers to the gradient in the variables $x$ and $z$. It can be proved that $(-\lap)^{\frac{1-\eps} 2} \theta(x,0,t) = \lim_{z \to 0} z^\eps \partial_z \theta(x,z,t)$. Given this construction it is now convenient to rewrite equation \eqref{e:equation} for $\alpha = 1-\eps$ in terms of the new coordinates
\begin{align}
\dv z^\eps \grad \theta &= 0 \qquad \text{where } z > 0 \label{e:qg1} \\
\partial_t \theta(x,0,t) + w \cdot \grad \theta(x,0,t) +  \lim_{z \to 0} z^\eps \partial_z \theta(x,z,t) &= 0 \label{e:qg2}
\end{align} 
the practical advantage with respect to \eqref{e:equation} is that we replaced a nonlocal operator $\fl{\alpha}$ by a local equation (in one more variable). We still have however the nonlocal contribution from $w = (-R_2 \theta,R_1 \theta)$.

We abuse notation by writing $\theta(x,t) = \theta(x,0,t)$.

\subsection*{Normalized problem}
Theorem \ref{t:l2tolinf} tells us that after any small period of time $t_0$, the solution will be in $L^\infty$. So we can assume that we have a solution in $L^\infty$ from the beginning by considering $\theta(x,t+t_0)$.

Moreover, we can rescale the function $\theta$ and consider
\[ \tilde \theta = \frac{1}{\norm{\theta}_{L^\infty}} \theta(T^{-1/\alpha} x, T^{-1} t) \]
so that we reduce the problem to the case $\norm{\theta}_{L^\infty} = 1$ and $T=1$. Including the extension variable $z$, the scaling is $\tilde \theta = \frac{1}{\norm{\theta}_{L^\infty}} \theta(T^{-1/\alpha} x, T^{-1/\alpha} z, T^{-1} \ t)$. However we will have to replace the equation (\ref{e:qg1}-\ref{e:qg2}) by
\begin{align}
\dv z^\eps \grad \theta &= 0 \qquad \text{where } z > 0 \label{e:normalized1} \\
\partial_t \theta(x,0,t) + M w \cdot \grad \theta(x,0,t) +  \lim_{z \to 0} z^\eps \partial_z \theta(x,z,t) &= 0 \label{e:normalized2}
\end{align} 
where $M$ is some constant depending only on $\norm{\theta}_{L^\infty} $ and $T$.

\subsection*{Scaling}

We use the same notation as in \cite{CV2007} appropriately scaled in terms of $\alpha$. We denote
\begin{align*}
B_r &= \{ x \in \R^2 : |x| < r \} \\
B_r^* &= B_r \times [0,r) = \{ (x,z) \in \R^3: |x| < r \ \wedge 0 \leq z < r \} \\
\Q_r &= B_r \times [0,r) \times (1-r^\alpha,1] = \{ (x,z,t) \in \R^4: |x| < r \ \text{and} \  0 \leq z < r \ \text{and} \ 1-r^\alpha < t \leq 1  \}
\end{align*}

The natural scaling of the equation is given by the fact that if $\theta$ solves (\ref{e:normalized1}-\ref{e:normalized2}), then also does $\tilde \theta(x,z,t) = \lambda^{-\eps} \theta(x_0 + \lambda x, \lambda z, t_0 + \lambda^{\alpha} t)$ for any $\lambda>0$.

On the other hand, we will use $C^\delta$ scaling, which does not preserve the equation exactly. If $\theta$ solves (\ref{e:normalized1}-\ref{e:normalized2}), then $\tilde \theta(x,z,t) = \lambda^{-\delta} \theta(x_0 + \lambda x, \lambda z, t_0 + \lambda^\alpha t)$ solves
\begin{align*}
\dv z^\eps \grad \theta &= 0 \qquad \text{where } z > 0 \\
\partial_t \theta(x,0,t) + \lambda^{\delta-\eps} M w \cdot \grad \theta(x,0,t) +  \lim_{z \to 0} z^\eps \partial_z \theta(x,z,t) &= 0
\end{align*}

Note that $\lambda^{\delta-\eps} M \leq M$ if $\lambda<1$.

\subsection*{Local improvement of oscillation.}
The following theorem is the key result that leads to H\"older continuity in \cite{CV2007}.

\begin{thm} \label{t:improvementofoscilation}
Let $\theta$ be a solution to
\begin{align*}
\dv z^\eps \grad \theta &= 0 \qquad \text{where } z > 0 \\
\partial_t \theta(x,0,t) + w \cdot \grad \theta(x,0,t) +  \lim_{z \to 0} z^\eps \partial_z \theta(x,z,t) &= 0
\end{align*}
for an arbitrary divergence free vector field $w$ such that
\[ \norm{w}_{L^\infty([0,1],L^{2n/\alpha}(B_1))} \leq K. \]

Then \[ \osc_{\Q_{1/2}} \theta \leq (1-\eta) \osc_{\Q_1} \theta \]
for some $\eta>0$ depending only on $K$, $\eps$ and dimension (dimension is two in the quasi-geostrophic equation case).
\end{thm}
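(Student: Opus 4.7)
The plan is to adapt the two-step De~Giorgi argument from \cite{CV2007}. After a harmless affine normalization I may assume $-1\le \theta\le 1$ pointwise on the space-time slab containing $\Q_1$; truncating outside $\Q_1$ is standard and only produces controlled tail contributions. By a sign dichotomy it suffices to prove the one-sided improvement: if $\set{\theta\le 0}$ occupies at least half of a reference region inside $\Q_1$, then $\theta\le 1-\eta$ on $\Q_{1/2}$.

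Two De~Giorgi-style lemmas do the work, both obtained from the energy inequality \eqref{e:energyinequality} tested against level truncations with a smooth spatial cutoff in the extended variables $(x,z)$.

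\emph{First lemma ($L^2\to L^\infty$).} There is $\delta_1>0$ such that if the weighted mass $\iiint_{\Q_1}(\theta-1/2)_+^2\, z^\eps\dd x\dd z\dd t \le \delta_1$, then $\theta\le 3/4$ on $\Q_{1/2}$. I would prove this by the classical nonlinear recursion $A_{k+1}\le C b^k A_k^{1+\mu}$, with $A_k$ the weighted $L^2$ mass of $(\theta-L_k)_+$ on a cylinder $\Q_{r_k}$, for levels $L_k\uparrow 3/4$ and radii $r_k\downarrow 1/2$. The gain $\mu>0$ comes from the trace Sobolev embedding of the $z^\eps$-weighted Dirichlet space into $L^{2n/(n-2\alpha)}(\R^n)$.

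\emph{Second lemma (intermediate value).} If $|\set{\theta\le 0}\cap \Q_1|\ge \mu_0$ and $|\set{\theta\ge 1-\delta}\cap \Q_1|\ge \mu_1$, then $|\set{0<\theta<1-\delta}\cap \Q_1|$ admits a positive lower bound depending only on $\mu_0$, $\mu_1$, $\delta$, and the hypothesis constants. I would obtain this by applying the energy inequality to the mid-level truncation $\min(\theta_+,1-\delta)$ to get a uniform $\dot H^\alpha$-bound on a.e.\ time slice, then invoking De~Giorgi's classical isoperimetric inequality slice-wise.

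Combining these as in \cite{CV2007}: starting from $|\set{\theta\le 0}\cap \Q_1|\ge c>0$, iterated application of the second lemma to the levels $1-2^{-k}$ produces pairwise disjoint "shells" $\set{1-2^{-k}<\theta<1-2^{-k-1}}$ each of quantitative measure; their total being bounded by $|\Q_1|$ forces $|\set{\theta\ge 1-2^{-N}}\cap \Q_1|$ to drop below $\delta_1$ after some finite $N=N(\delta_1,K,\eps)$ steps, at which point the first lemma fires and gives $\theta\le 1-2^{-N-1}$ on $\Q_{1/2}$.

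\emph{Main obstacle.} The delicate point is controlling the drift term in the cutoff energy identity. Without a spatial cutoff, divergence-freeness of $w$ kills the drift contribution entirely; with a cutoff $\chi$ one is left with an error of the form $\int w\cdot \theta_\lambda^2\, \chi\grad\chi\dd x$. H\"older's inequality against the hypothesis $\norm{w}_{L^{2n/\alpha}}\le K$ together with the trace Sobolev inequality bound this by a multiple of the Dirichlet form, but closure requires an extra smallness factor of order $|\supp(\theta_\lambda)|^{\alpha/n}$ that the De~Giorgi recursion itself supplies at each step. Checking that this closure is uniform in $\eps$ (so that $\eta$ does not collapse as $\eps\to 0$) is the principal technical burden.
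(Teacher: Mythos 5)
Your proposal reproduces the two-lemma De~Giorgi scheme of Caffarelli--Vasseur, which is exactly what the paper relies on: the paper itself does not re-run the iteration, it cites \cite{CV2007} and only supplies, in the appendix, the two ingredients that need to be modified when $\alpha<1$ (a local energy inequality and an isoperimetric lemma, both carrying the weight $z^\eps$). Your overall structure, the drift-term difficulty you isolate, and the closure mechanism via $|\supp(\theta_\lambda)|^{\alpha/n}$ are all correct.

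There is, however, one genuine gap in the second lemma. You propose to obtain a $\dot H^\alpha$-bound on each time slice and then invoke ``De~Giorgi's classical isoperimetric inequality slice-wise.'' The classical isoperimetric lemma applies to $H^1$ functions; it does not apply to a function known only to be in $\dot H^\alpha$ with $\alpha<1$. What actually happens in \cite{CV2007}, and what this paper must adapt, is that the isoperimetric inequality is applied \emph{to the harmonic extension} $\theta(\cdot,\cdot,t)$ on the half-space $B_1^*$ in the variables $(x,z)$, where the relevant energy is a genuine first-order Dirichlet energy but with the degenerate weight $z^\eps$. One therefore needs a weighted isoperimetric lemma in $H^1(B_1^*,z^\eps)$; this is the appendix's second proposition, and its proof requires a new observation (that on the segment from $X$ to $Y^*$ through $Z$ one has $[Y^*]_{n+1}^\eps X_{n+1}^\eps\le Z_{n+1}^\eps$, since $Z$ lies between the endpoints and both endpoints sit at height $\le 1$). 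Your sketch, as written, silently assumes the unweighted lemma suffices, and that step would fail.

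A smaller point: your final paragraph worries that $\eta$ might collapse as $\eps\to 0$. The degeneracy runs the other way. Since $\alpha=1-\eps$, the limit $\eps\to 0$ is the critical case, where the trace Sobolev exponent $2n/(n-2\alpha)$ and the weight are harmless; the constants deteriorate as $\eps\to 1$ (i.e.\ $\alpha\to 0$). The paper notes precisely this and takes $\eta$ independent of $\eps$ only on a range such as $\eps\in[0,1/2]$.
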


The proof of Theorem \ref{t:improvementofoscilation} was given in \cite{CV2007} for the case $\alpha = 1$. It relies only on a local energy inequality and De Giorgi's oscillation lemma. We prove both things in the appendix, so that the proof of Theorem \ref{t:improvementofoscilation} generalizes to smaller values of $\alpha$.

In \cite{CV2007}, the estimate in $L^\infty([0,1],L^{2n/\alpha}(B_1))$ was replaced by an estimate in $L^\infty(BMO)$ plus a control on the mean. Their assumption reads \[ \norm{w}_{L^\infty([0,1],BMO(\R^n))} + \sup_{[0,1]} \abs{\int_{B_1} w(x,t) \dx} \leq K.\]
This was done because the $L^{2n/\alpha}(B_1)$ norm is not invariant by the scaling of the equation. Since in this paper we will deal with scaling in a somewhat different way, we keep the sharp assumption from the proof, in $L^\infty([0,1],L^{2n/\alpha}(B_1))$.

The value of $\eta$ does depend on $\eps$. In particular it degenerates as $\eps \to 1$ (or equivalently as $\alpha = 1-\eps$ goes to zero). However, since in this paper we are interested only in the case of $\eps$ small, we can consider $\eta$ to be independent of $\eps$ (say for $\eps \in [0,1/2]$).
\section{Proofs}

In this section we provide the proofs of the main theorems \ref{t:main} and \ref{t:main2}.

\begin{lemma} \label{l:tails}
If $\theta$ is a solution of \eqref{e:equation}, then for any  $t>0$ we have the estimate
\begin{equation}
\int_{\R^2 \setminus B_1} \frac{|\theta(x,t)|}{|x|^2} \dx \leq C \norm{\theta_0}_{L^2}
\end{equation}

For any $t>1$, we have the improved estimate
\begin{equation}
\label{e:tails}
\int_{\R^2 \setminus B_1} \frac{|\theta(x,t)|}{|x|^2} \dx \leq C (1+\log t)  t^{-\alpha} \norm{\theta_0}_{L^2}
\end{equation}
\end{lemma}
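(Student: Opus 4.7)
The plan is to combine the two decay estimates already on the table — $\norm{\theta(\cdot,t)}_{L^2} \leq \norm{\theta_0}_{L^2}$ from Theorem \ref{t:l2decreasing} and the pointwise decay $\norm{\theta(\cdot,t)}_{L^\infty} \leq C t^{-1/\alpha} \norm{\theta_0}_{L^2}$ from Theorem \ref{t:l2tolinf} — with Cauchy--Schwarz and a splitting of $\R^2 \setminus B_1$ at a suitable radius. No new PDE input is needed; the lemma is essentially an interpolation statement between the $L^2$ and $L^\infty$ endpoints.

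The first estimate, valid for all $t>0$, follows at once from Cauchy--Schwarz,
\[ \int_{\R^2 \setminus B_1} \frac{|\theta(x,t)|}{|x|^2} \dx \leq \norm{\theta(\cdot,t)}_{L^2} \left( \int_{|x|>1} |x|^{-4} \dx\right)^{1/2}, \]
since $|x|^{-4}$ is integrable at infinity in two dimensions, combined with Theorem \ref{t:l2decreasing}. For the improved estimate with $t>1$ this crude bound is too weak, so I would split at a radius $R>1$ to be chosen. On the inner annulus, the $L^\infty$ bound of Theorem \ref{t:l2tolinf} together with $\int_{B_R\setminus B_1} |x|^{-2} \dx = 2\pi \log R$ yields a contribution of order $t^{-1/\alpha}(\log R)\,\norm{\theta_0}_{L^2}$, while on the outer region $|x|\geq R$ Cauchy--Schwarz with the $L^2$ monotonicity gives a contribution of order $R^{-1}\norm{\theta_0}_{L^2}$.

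Choosing $R = t^{1/\alpha}$ balances the two terms and produces the sharper bound $C(1+\log t)\,t^{-1/\alpha}\norm{\theta_0}_{L^2}$. Since $t\geq 1$ and $\alpha\leq 1$ imply $t^{-1/\alpha}\leq t^{-\alpha}$, the bound stated in the lemma follows (in fact one obtains a slightly stronger decay in $t$). There is no real obstacle; the only thing to be careful about is that the constant in Theorem \ref{t:l2tolinf} is genuinely linear in $\norm{\theta_0}_{L^2}$, and that one may need to invoke Theorem \ref{t:l2tolinf} from time $t/2$ (using $L^2$ monotonicity) in order to produce a clean constant.
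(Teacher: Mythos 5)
Your proof is correct and follows essentially the same route as the paper: split the integral at a radius $R$, use the $L^\infty$ decay of Theorem \ref{t:l2tolinf} on the annulus $B_R\setminus B_1$, and use Cauchy--Schwarz with the $L^2$ monotonicity of Theorem \ref{t:l2decreasing} on the exterior $\R^2\setminus B_R$. The only difference is the choice of cutoff radius: the paper takes $R=t^\alpha$, while your choice $R=t^{1/\alpha}$ balances the two contributions and in fact yields the slightly stronger decay $C(1+\log t)\,t^{-1/\alpha}\norm{\theta_0}_{L^2}$, which implies the stated $t^{-\alpha}(1+\log t)$ bound for $t>1$ since $\alpha\le 1$.
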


\begin{proof}
For any $R>1$, we split the integral and use H\"older's inequality
\begin{align*}
\int_{\R^2 \setminus B_1} \frac{|\theta(x,t)|}{|x|^2} \dx &= \int_{B_R \setminus B_1} \frac{|\theta(x,t)|}{|x|^2} \dx + \int_{\R^2 \setminus B_R} \frac{|\theta(x,t)|}{|x|^2} \dx \\
&\leq \norm{\theta}_{L^\infty} \log R + \frac{C}{R} \norm{\theta}_{L^2}
\end{align*}

The first estimate follows if we pick $R=1$. Since the estimate holds for any $R$, when $t>1$ we choose $R = t^\alpha$. Using Theorem \ref{t:l2tolinf} and Theorem \ref{t:l2decreasing} we get,
\begin{align*}
\int_{\R^2 \setminus B_1} \frac{|\theta(x,t)|}{|x|^2} \dx &\leq \alpha \norm{\theta}_{L^\infty} \log t + C t^{-\alpha} \norm{\theta}_{L^2} \\
&\leq C (1+\log t)  t^{-\alpha} \norm{\theta_0}_{L^2}
\end{align*}
which finishes the proof.
\end{proof}

\begin{proof} [Proof of Theorem \ref{t:main}]
We will prove that $\theta$ is H\"older continuous at the point $(0,T)$. There is nothing special about $x=0$, so the proof implies the result of the theorem.

Let us choose some $t_0 < T$ (for example $t_0 = T/1000$), we have $\norm{\theta(-,t_0)}_{L^\infty(\R^3)} \leq C\norm{\theta_0}_{L^2}$ by Theorem \ref{t:l2tolinf}. Moreover, from Lemma \ref{l:tails}
\begin{equation} \label{e:outsideTheUnitBall}
\int_{\R^2 \setminus B_1} \frac{|\theta(x,t_0)|}{|x|^2} \dx \leq C \norm{\theta_0}_{L^2}.
\end{equation}

We normalize the problem in the following way. We consider \[\tilde \theta(x,z,t) = \frac{\theta(x/(T-T_0)^{\frac 1 \alpha},z/(T-T_0)^{\frac 1 \alpha},(t-t_0)/(T-T_0))}{C\norm{\theta_0}_{L^2}},\] so that $|\tilde \theta| \leq 1$ in $\R^2 \times [0,+\infty) \times [0,1]$,
\begin{equation} \label{e:tail}
\int_{\R^2 \setminus B_1} \frac{|\tilde \theta(x,t)|}{|x|^2} \dx \leq 1
\end{equation}
for $t \in [0,1]$, and $\tilde \theta$ solves \eqref{e:normalized1} and \eqref{e:normalized2}. Note that the constant $M$ depends only on $\norm{\theta(-,t_0)}_{L^\infty}$ and the right hand side of \eqref{e:outsideTheUnitBall} which are controlled by the $L^2$ norm of the original initial condition.

We stress that all estimates in the rest of this proof depend only on $\norm{\theta}_{L^\infty([t_0,+\infty),\R^n)}$ and the right hand side in \eqref{e:outsideTheUnitBall}.

From now on we will abuse notation by omitting the tilde in $\tilde \theta$ and we write just $\theta$. We assume $\norm{\theta(-,t)}_{L^\infty}\leq 1$ and \eqref{e:tail} for all $t \in [0,1]$.

We write \[ \theta_r(x,z,t) = \frac{1}{r^\delta} \theta \left( r x, r z , 1 - r^\alpha (1-t) \right). \]

Assuming that $\delta \ll 1$, we will show that $\osc_{\Q_r} \theta \leq C r^\delta$ for any $r<1$, obtaining H\"older continuity at the point $(0,1)$ (and by scaling and translation, at any point $(x,T)$ in the original equation). This is equivalent of saying that $\osc_{\Q_1} \theta_r \leq C$ for any $r<1$.

We will find a $1>\rho>0$ such that for $r_k = \rho^k$, $\osc_{\Q_1} \theta_{r_k} \leq 1$, and the result clearly follows.

We prove that $\osc_{\Q_1} \theta_{r_k} \leq 1$ by a usual iterative procedure, but since the equation is nonlocal, we must carry on some extra information in the iteration. In this case, the first step in the iterative process is a little bit different from the successive steps. We explain them separately to avoid confusion.

We stress that we need to choose $\rho>0$ and $\delta>0$ small. Then for $0 < \eps \leq \delta$ the theorem will apply. The choice of $\rho$ and $\delta$ must be made carefully. When we write a universal constant $C$ in this proof, we mean a constant that does not depend on $\rho$ or $\delta$.

\noindent \textbf{Step 1}

We start with $|\theta|\leq 1$ in $\R^2 \times [0,+\infty) \times [0,1]$ and $w = R^\bot \theta = (-R_2 \theta, R_1 \theta) \in L^\infty([0,1],BMO)$.

We also know \eqref{e:tail}, which tells us that the contribution of the tails in the integral representation of $R_i \theta$ are bounded. Equivalently, that $R_i (\theta (1-\chi_{B_2})) \in L^\infty$. On the other hand $R_i (\theta \chi_{B_2}) \in L^p$ for any $p<+\infty$ since $\theta \chi_{B_2}$ is bounded and compactly supported. Thus, for any $p<+\infty$, there is a constant $C$ such that $\sup_{t \in [0,1]} \norm{R_i \theta(-,t)}_{L^p(B_1)} \leq C$. In particular this estimate holds for $p = n/\alpha$ and we can apply Theorem \ref{t:improvementofoscilation} to get
\[ \osc_{\Q_{1/2}} \theta \leq 2 - 2\eta \]

Before rescaling $\theta$ to prepare for the next iterative step, we perform a small change of variables to \emph{follow the flow}. This is the key to make the iteration scheme succeed.

We write $w = w_1 + w_2$ where $w_2$ is given by the truncated integral
\[ w_2(x,t) = c \int_{\R^2 \setminus B_2} \frac{ \theta(y) (y-x)^\bot }{ |y-x|^{3} } \dd y \]

Note that $w_2$ is a continuous function in $x$. Let $V: [0,1] \to \R^2$ be a solution to the following ODE
\begin{align*}
V(1) &= 0 \\
\dot V(t) &= M w_2(V(t),t)
\end{align*}

We define $\tilde \theta(x,y,t) = \theta(x+V(t),y,t)$ and verify that $\tilde \theta$ satisfies the equation
\begin{align*}
\dv z^\eps \grad \tilde \theta &= 0 \qquad \text{where } z > 0 \\
\partial_t \tilde \theta(x,0,t) + M (w(x,t) - w_2(V(t),t)) \cdot \grad \tilde \theta(x,0,t) +  \lim_{z \to 0} z^\eps \partial_z \tilde \theta(x,z,t) &= 0
\end{align*}

From \eqref{e:tail}, we get that $|w_2| < L$ for some universal constant $L$. If we choose $\rho$ such that 
\begin{equation} \label{e:rho1}
L \rho^\alpha + \rho \leq 1/2 ,
\end{equation} then $(x+V(t),y,t) \in \Q_{1/2}$ if $(x,y,t) \in \Q_\rho$.

Now we rescale. For $m = (\sup_{\Q_{1/2}} \theta + \inf_{\Q_{1/2}} \theta)/2$, let 
\[ \theta_1(x,y,t) = (\tilde \theta - m)_\rho. \]

\begin{align*}
\dv z^\eps \grad \theta_1 &= 0 \qquad \text{where } z > 0 \\
\partial_t \theta_1(x,0,t) + r^{\delta-\eps} M (w(x,t) - \overline w(t)) \cdot \grad \theta_1(x,0,t) +  \lim_{z \to 0} z^\eps \partial_z \theta_1(x,z,t) &= 0
\end{align*}

where 
\[ \overline w(t) = c \int_{\R^2 \setminus B_{2/\rho}} \frac{ (\theta_1(y,t) - \theta_1(0,t)) y^\bot }{ |y|^3 } \dd y. \]

If $\delta$ is small enough so that $\rho^{-\delta} (1 - \eta) \leq 1$, then we will have $|\theta_1| \leq 1$ in $\Q_1$. Moreover $|\theta_1(x,t)| \leq \rho^{-\delta}$ where $|x|>1$.

We define $M_1 = r^{(\delta-\eps)k} M \leq M$. We are ready to move to the second step of the iteration.

\noindent \textbf{Step $k$ for $k>1$}

Assume that at the beginning of the $k$th step in the iteration we have a $\theta_k$ such that
\begin{align*}
\dv z^\eps \grad \theta_k &= 0 \qquad \text{where } z > 0 \\
\partial_t \theta_k(x,0,t) + M_k (w(x,t)-\overline w(t)) \cdot \grad \theta_k(x,0,t) +  \lim_{z \to 0} z^\eps \partial_z \theta_k(x,z,t) &= 0
\end{align*}

where $w = R^\bot \theta$ and
\[ \overline w = c \int_{\R^2 \setminus B_{2/\rho}} \frac{ \theta_k(y) y^\bot }{ |y|^3 } \dd y. \]

Moreover $|\theta_1| \leq 1$ in $\Q_1$ and $\theta_k(x,t) \leq 2 |x|^{2\delta}$  where $|x|>1$. Recall that $M_k \leq M$.

Let us write $w - \overline w = w_1 + w_2 + w_3$ where 
\begin{align}
w_1(x,t) &= c \int_{B_2} \frac{ \theta_k(y,t) (y-x)^\bot }{ |y-x|^3 } \dd y \\
w_2(x,t) &= c \int_{B_{2/\rho} \setminus B_2} \frac{ \theta_k(y,t) (y-x)^\bot }{ |y-x|^3 } \dd y \\
w_3(x,t) &= c \int_{\R^2 \setminus B_{2/\rho}} \theta_k(y,t) \left( \frac{(y-x)^\bot }{ |y-x|^3 } - \frac{y^\bot }{ |y|^3 } \right) \dd y
\end{align}

Let us analyze each component $w_i$. Since we are choosing $\delta$ small enough, we can and will assume $\rho^{-\delta} < 3/2 < 2$.

For estimating $w_1$, we notice that we are integrating a on a given compact domain $B_2$. Modulo a lower order correction, this is the same as applying a Riesz transform to a function with compact support. Therefore, from the $L^\infty$ estimate of $\theta$, we can apply classical Calderon-Zygmund estimates, we obtain that $w_1$ is in $L^\infty([0,1],L^p(B_1))$ for any $p \in (1,\infty)$. In particular for $p = 2n / \alpha$, and its norm (for this particular $p$) is less than a universal constant $K$ (in this case it does not depend even on $\rho$).

Both $w_2$ and $w_3$ are bounded. We will estimate their $L^\infty$ norms in $\Q_1$, which is stronger than the norms in $L^\infty([0,1],L^{2n/\alpha}(B_1))$.
\begin{align*}
|w_2(x,t)| &\leq c \int_{B_{2/\rho} \setminus B_2} \frac{ 2^{1+2\delta} \rho^{-2\delta} }{ |y-x|^2 } \dd y  && \text{using that } |\theta_k(y,t)| \leq 2^{1+2\delta} \rho^{-2\delta} \text{ if } y \in B_{2/\rho}\\
&\leq -C \log \rho
\end{align*}

\begin{align*}
|w_3(x,t)| &\leq c \int_{\R^2 \setminus B_{2/\rho}} 2 |y|^{2\delta} \abs{ \frac{(y-x)^\bot }{ |y-x|^3 } - \frac{y^\bot }{ |y|^3 } } \dd y && \text{using that } |\theta_k(y,t)| \leq 2|y|^{2\delta} \text{ where } |x|>1 \\
&\leq c \int_{\R^2 \setminus B_{2/\rho}} 2 |y|^{2\delta} \frac{C|x|}{|y|^3} \dd y && \text{where } |x|\leq 1\\
&\leq C \rho
\end{align*}

Since $w_1+w_2+w_3 \in L^\infty([0,1],L^{2n/\alpha}(B_1))$, we can apply Theorem \ref{t:improvementofoscilation} again to obtain $\osc_{\Q_{1/2}} \theta \leq 2-2\eta$ (where $\eta$ depends on $\rho$).

Since $\overline w_2$ is continuous, we can solve the equation as before
\begin{align*}
V(1) &= 0 \\
\dot V(t) &= M_k ( w_2(V(t),t) + w_3(V(t),t) )
\end{align*}

Note that from the estimates above for $|w_1|$ and $|w_2|$ we have $|\dot V(t)| \leq -C \log \rho + C \rho$. Therefore $|V(t)| \leq -C \rho^\alpha \log \rho + C \rho^{1+\alpha}$ if $t \in [(1 - \rho^\alpha),1]$.

We choose $\rho$ small such that 
\begin{equation} \label{e:rho2}
-C \rho^\alpha \log \rho + C \rho^{1+\alpha} + \rho \leq 1/2
\end{equation}
so as to make sure that $(x+V(t),y,t) \in \Q_{1/2}$ if $(x,y,t) \in \Q_\rho$. Note that there is no circular dependence of constants since the constants $C$ above are universal.

We continue as in step 1. We define $\tilde \theta_k(x,y,t) = \theta_k(x+V(t),y,t)$ and verify that  $\tilde \theta_k$ satisfies the equation
\begin{align*}
\dv z^\eps \grad \tilde \theta_k &= 0 \qquad \text{where } z > 0 \\
\partial_t \tilde \theta_k(x,0,t) + M_k \tilde w \cdot \grad \tilde \theta_k(x,0,t) +  \lim_{z \to 0} z^\eps \partial_z \tilde \theta_k(x,z,t) &= 0
\end{align*}
with 
\begin{align*}
 \tilde w(x,t) &= w(x+V(t),t) - \overline w(t) - \frac{\dot V(t)} {M_k} \\ &= w_1(x+V(t),t)+w_2(x+V(t),t)+w_3(x+V(t),t)-w_2(V(t),t)-w_3(V(t),t) \\
 &= c \left( \int_{\R^n} \frac{ \tilde \theta_k(y,t) (y-x)^\bot }{ |y-x|^3 } \dd y - \int_{\R^n \setminus B_2} \frac{ \tilde \theta_k(y,t) y^\bot }{ |y|^3 } \dd y \right).
\end{align*}

Now we rescale. Since $\osc_{\Q_\rho} \tilde \theta \leq 2-2\eta \leq 2 \rho^\delta$. Let us pick $m \in [-1+\rho^\delta,1-\rho^\delta]$ such that $|\tilde \theta - m| \leq \rho^\delta$ in $\Q_\rho$ (typically $m = (\sup_{\Q_{1/2}} \theta_k + \inf_{\Q_{1/2}} \theta_k)/2$). Let 
\[ \theta_{k+1}(x,y,t) = (\tilde \theta - m)_\rho. \]

\begin{align*}
\dv z^\eps \grad \theta_{k+1} &= 0 \qquad \text{where } z > 0 \\
\partial_t \theta_{k+1}(x,0,t) + M_{k+1} (w(x,t)-\overline w(t)) \cdot \grad \theta_{k+1}(x,0,t) +  \lim_{z \to 0} z^\eps \partial_z \theta_{k+1}(x,z,t) &= 0
\end{align*}

where $M_{k+1} = r^{\delta-\eps} M_k \leq M$, $w = R^\bot \theta$ and 
\[ \overline w(t) = c \int_{\R^2 \setminus B_{2/\rho}} \frac{ \theta_{k+1}(y) y^\bot }{ |y|^3 } \dd y. \]

Then we will have $|\theta_{k+1}| \leq 1$ in $\Q_1$. To make sure we obtain our desired estimates when $|x|>1$ we must make some computations which follow.
\begin{align*}
|\theta_{k+1}(x,t)| &\leq \rho^{-\delta} |\tilde \theta(\rho x, \rho^\alpha t) - m| \\
&\leq \rho^{-\delta} (m + |\theta_k(\rho x + V(\rho^\alpha t)), \rho^\alpha t)|) \\
&\leq \begin{cases}
\rho^{-\delta} (2-\rho^\delta)  & \text{if } |x| \leq \frac{1}{2\rho} \\
\rho^{-\delta} (1-\rho^\delta + \rho^{-2\delta} (\rho |x| + 1/2)^{2 \delta}) & \text{if } |x| > \frac{1}{2\rho}
\end{cases}
\end{align*}

In case $1 \leq |x| \leq \frac{1}{2\rho}$ we have $|\theta_{k+1}(x,t)| \leq \rho^{-\delta} (2-\rho^\delta) \leq 2 \leq 2|x|^{2\delta}$ since $\rho^\delta$ was chosen larger than $2/3$.

In case $|x| \geq \frac{1}{2\rho}$, we have 
\begin{align*}
|\theta_{k+1}(x,t)| &\leq \rho^{-\delta} (1-\rho^\delta + 2 (\rho |x| + 1/2)^{2\delta}) \\
&\leq \rho^{-\delta} - 1 + 2 \rho^{-\delta} (2\rho |x|)^{2\delta} \\
&\leq \rho^{-\delta} - 1 + 2 \rho^\delta 2^{2\delta} |x|^{2\delta} \\
&\leq 2 |x|^{2\delta} \left(\frac{\rho^{-\delta}}{2|x|^{2\delta}} - \frac{1}{2|x|^{2\delta}} + 2^{2\delta} \rho^\delta \right) \\
&\leq 2 |x|^{2\delta} \left(\frac{(4\rho)^{\delta}}{2} - \frac{4^\delta \rho^{2\delta}}{2} + (4\rho)^\delta \right) \\
&\leq 2 |x|^{2\delta} (4\rho)^{\delta} \left(\frac{3}{2} - \frac{\rho^{\delta}}{2} \right) < 2 |x|^{2\delta}
\end{align*}

where the last inequality holds if $\rho \leq 1/16$, since then we would have
\[ (4\rho)^{\delta} \left(\frac{3}{2} - \frac{\rho^{\delta}}{2} \right) < \rho^{\delta/2} \left(\frac{3}{2} - \frac{\rho^{\delta}}{2} \right) \]
which is less than $1$ for any $\delta>0$ (the polynomial $x(3/2 -x^3/2)$ has a maximum at $x=1$).

Therefore in every case we obtained $|\theta_{k+1}| \leq 1$ in $\Q_1$ and $|\theta_{k+1}| \leq 2|x|^{2\delta}$ when $|x|>1$. We finish step $k$ and are ready for the next step in the iteration.

We stress that there is no circular dependence in the choice of constants. The constant $\rho$ is the first one which has to be chosen. It must satisfy three inequalities.
\begin{itemize}
\item $L \rho^\alpha + \rho \leq 1/2$ for \eqref{e:rho1} in the first step.
\item $-C \rho^\alpha \log \rho + C \rho^{1+\alpha} + \rho \leq 1/2$ for \eqref{e:rho2}.
\item $\rho < 1/16$ in order to make the very last inequality work.
\end{itemize}
All the constants above depend only $M$ and \eqref{e:tail}, which both depend only on $\norm{\theta_0}_{L^2}$.

Once we have $\rho$, the value of $\eta$ follows from applying theorem \ref{t:improvementofoscilation}. So $\eta$ depends on the initial choice of $\rho$. Once we have $\eta$ and $\rho$, we choose $\delta$ so that $\rho^\delta \geq (1-\eta)$ and $\rho^{-\delta} \leq 2$.
\end{proof}

\begin{proof} [Proof of Theorem \ref{t:main2}]
Using Theorem \ref{t:l2tolinf}, there is a $t_0$ depending only on $\norm{\theta_0}_{L^2}$ such that $\norm{\theta(-,t_0)}_{L^\infty} \leq 1$. We can also pick $t_0$ large such that the right hand side in Lemma \ref{l:tails} is smaller than $1$. At that point, we are already in the normalized situation of the proof of Theorem \ref{t:main}, the choice of all constants from that point on does not depend on $\norm{\theta_0}_{L^2}$.

We consider the function $\theta$ starting at this $t_0$ and we apply theorem \ref{t:main} with $T=1$, $M=1$ and the right hand side in \eqref{e:tail} equal to $1$. Then if $\eps \leq \delta$ for $\delta$ small enough, we will have $\theta \in C^\delta$ at any time $t \geq t_0 + 1$. Further $C^\infty$ regularity follows from \cite{constantin2007rhc}.
\end{proof}

\section*{Appendix}

In this appendix we present the local energy inequality and a weighted version of De Giorgi's isoperimetrical lemma so that we can reproduce the proof in \cite{CV2007} of Theorem \ref{t:improvementofoscilation}. The proofs use essentially the same ideas as in \cite{CV2007}. The main modification is that we need to use the weight $z^\eps$ in the upper half space, so that the Dirichlet to Neumann map for harmonic functions corresponds to the fractional laplacian $\fl{\alpha}$ (See \cite{CS2007}).

For the local energy inequality, we can deal with a more general equation
\begin{equation} \label{e:cveq}
\begin{aligned}
\dv z^\eps \grad \theta &= 0 \qquad \text{where } z > 0 \\
\partial_t \theta(x,0,t) + w \cdot \grad \theta(x,0,t) +  \lim_{z \to 0} z^\eps \partial_z \theta(x,z,t) &= 0
\end{aligned}
\end{equation}
where $w$ is a fixed divergence free vector field in $\R^n$ and $\theta : \R^n \times [0,\infty) \times [0,\infty) \to \R$. For proving Theorem \ref{t:improvementofoscilation} we do not need to use the relation between $w$ and $\theta$, and the dimension $n$ is arbitrary.

Note that after restricting $\theta$ to $z=0$, \eqref{e:cveq} is equivalent to
\begin{equation}
\partial_t \theta(x,t) + w \cdot \grad \theta(x,t) +  \fl{\alpha} \theta(x,t) = 0
\end{equation}

\begin{prop}[Local energy inequality]
Let $t_1 < t_2$ and let $\theta \in L^\infty(t_1,t_2; L^2(\R^n))$ with $\fl{\alpha} \theta \in L^2((t_1,t_2) \times \R^n)$, be a solution to \eqref{e:equation} with velocity $w$ satisfying:
\[ \norm{w}_{L^\infty(t_1,t_2;L^{2n/\alpha}(B_2))} \leq C \]
Then there exists a constant $C_1$ (depending only on $C$) such that for every $t \in (t_1,t_2)$ and cut-off function $\eta$ compactly supported in $B_2^*$:
\begin{equation}\label{e:e1}\begin{aligned}
 \int_{t_1}^{t_2} \int_{B_2^*} z^\eps |\grad(\eta[\theta^*]_+)|^2 \dx \dd z \dd t &+ \int_{B_2} (\eta [\theta]^+)^2 (x,t_2) \dx \\
 &\leq \int_{B_2} (\eta [\theta]_+)^2(x,t_1) \dx + C_1 \int_{t_1}^{t_2} \int_{B_2} (|\grad \eta| [\theta]_+)^2 \dx \dd t \\
 & \phantom{\leq \int_{B_2} (\eta [\theta]_+)^2(x,t_1) \dx} + C_1 \int_{t_1}^{t_2} \int_{B_2^*} z^\eps (|\grad \eta| [\theta]_+)^2 \dx \dd z \dd t .
\end{aligned}\end{equation}
\end{prop}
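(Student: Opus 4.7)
The plan is to use $\eta^2[\theta]_+$ as a test function against the extension equation. Multiplying $\dv(z^\eps \grad\theta)=0$ by $\eta^2[\theta]_+$ and integrating over $\{z>0\}$, the compact support of $\eta$ in $B_2^*$ eliminates the lateral and upper boundary contributions. Only the boundary $\{z=0\}$ survives, where the outward normal $-e_z$ produces the flux $-\lim_{z\to 0^+} z^\eps \partial_z \theta$. Using the second line of \eqref{e:cveq} to rewrite the limit as $\partial_t\theta + w\cdot\grad\theta$ yields the identity
\[
\int_{\{z>0\}} z^\eps \grad\theta \cdot \grad(\eta^2[\theta]_+) \dx \dd z \;=\; \int_{\R^n} \eta^2[\theta]_+ \,(\partial_t\theta + w\cdot\grad\theta)\dx.
\]

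On the left, $\grad[\theta]_+ = \grad\theta$ on $\{\theta>0\}$ and the integrand vanishes elsewhere, so the algebraic identity $|\grad(\eta u)|^2 = \eta^2|\grad u|^2 + 2\eta u \grad\eta\cdot\grad u + |\grad\eta|^2 u^2$ rewrites the integrand as $|\grad(\eta[\theta]_+)|^2 - |\grad\eta|^2[\theta]_+^2$. On the right, the time contribution becomes $\tfrac12 \partial_t\int \eta^2[\theta]_+^2$, and for the transport term I would use $\dv w = 0$ together with integration by parts in $x$ to move one derivative off $\theta$ and onto $\eta$, producing $-\int \eta[\theta]_+^2\, w\cdot\grad\eta$. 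Integrating in $t$ from $t_1$ to $t_2$ and rearranging gives all the terms of \eqref{e:e1}, up to the single remaining drift term $\int_{t_1}^{t_2}\!\int \eta[\theta]_+^2\, w\cdot\grad\eta\, \dx\, \dd t$ that must be absorbed.

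The main technical step, which dictates the $L^{2n/\alpha}$ exponent on $w$, is estimating this drift. I would apply H\"older with conjugate exponents $(2n/\alpha,\, 2n/(n-\alpha),\, 2)$ to bound it by
\[
\norm{w}_{L^{2n/\alpha}(B_2)}\,\norm{\eta[\theta]_+}_{L^{2n/(n-\alpha)}(\R^n)}\,\norm{|\grad\eta|[\theta]_+}_{L^2(\R^n)}.
\]
The middle factor is controlled by the weighted trace/Sobolev inequality from \cite{CS2007}: $\norm{\grad(\eta[\theta]_+)}_{L^2(z^\eps\, \dd z\, \dx)}$ dominates the $\dot H^{\alpha/2}(\R^n)$ norm of its trace at $z=0$, which in turn embeds into $L^{2n/(n-\alpha)}(\R^n)$ since $\alpha<n$. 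A Young inequality $ab\leq \tfrac{1}{2}a^2 + Cb^2$ then absorbs the weighted gradient factor into the left-hand side of \eqref{e:e1}, producing exactly the $C_1\int(|\grad\eta|[\theta]_+)^2$ error term claimed. The main obstacle I anticipate is making this trace step uniform: one needs the constant from the weighted trace embedding to remain bounded for the range of $\eps$ under consideration, and to verify that $C_1$ depends only on the $L^{2n/\alpha}$ bound on $w$. Once this is in place the rest reduces to bookkeeping of the truncation-energy computation.
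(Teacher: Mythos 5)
Your proposal is correct and follows essentially the same route as the paper: test the extension equation with $\eta^2[\theta]_+$, use the Dirichlet-to-Neumann flux at $z=0$ together with the equation, and absorb the drift term via H\"older, the weighted trace-Sobolev inequality of Caffarelli--Silvestre, and Young's inequality. The only cosmetic difference is that you apply a single three-factor H\"older with exponents $\bigl(2n/\alpha,\ 2n/(n-\alpha),\ 2\bigr)$ while the paper stages it in two steps (first $2n/(n-\alpha)$ against $2n/(n+\alpha)$, then splitting the latter into $2n/\alpha$ and $2$); the two are equivalent, and your version in fact makes the role of the $L^{2n/\alpha}$ hypothesis on $w$ slightly more transparent.
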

 
Note that the only difference with the corresponding estimate in \cite{CV2007} is the factor $z^\eps$ in every integral involving the extension to $z>0$. This is a straight forward modification following \cite{CS2007}. This only modification applies along the proof.

Note also that the BMO norm plus an estimate on the mean is stronger than $L^{2n/\alpha}$, so in particular the estimate holds if $w \in L^\infty(BMO)$ and the mean of $w$ in $B_2$ is also bounded.

\begin{proof}
We have for every $t \in (t_1,t_2)$:
\[ \begin{aligned}
 0 &= \int_0^\infty \int_{\R^n} \eta^2 [\theta]_+ \dv (z^\eps \grad \theta) \dx \dd z \\
 &= \int_0^\infty \int_{\R^n} -z^\eps |\grad(\eta [\theta]_+)|^2 + z^\eps |\grad \eta|^2 [\theta]_+^2 \dx \dd z + \int_{\R^n} \eta^2 [\theta]_+ \fl{\alpha} \theta \dx
\end{aligned} \]
where the characterization of $\fl{\alpha} \theta$ as the Dirichlet to Neumann operator from \cite{CS2007} was used.

As in \cite{CV2007}, we use the equation \eqref{e:equation} which leads to
\[ \begin{aligned}
&\int_{t_1}^{t_2} \int_0^\infty \int_{\R^n} z^\eps |\grad(\eta [\theta]_+)|^2 \dx \dd z \dd t + \int_{\R^n} \eta^2 \frac{[\theta]_+^2 (t_2)}{2} \dx \\
&\qquad \leq \int_{\R^n} \eta^2 \frac{[\theta]_+^2 (t_1)}{2} \dx + \int_{t_1}^{t_2} \int_0^\infty \int_{\R^n} z^\eps |\grad \eta|^2 [\theta]_+^2 \dx \dd z \dd t + \abs{ \int_{t_1}^{t_2} \int_{\R^n} \eta \grad \eta \cdot w [\theta]_+^2 \dx \dd t}.
\end{aligned} 
\]

To dominate the last term, we use Sobolev embedding and the variational characterization of the equation $\dv z^\eps \grad U = 0$.

\[
\begin{aligned}
\norm{\eta [\theta]_+}^2_{L^{\frac{2n}{n-2\alpha}}(\R^n)} &\leq C \norm{\eta [\theta]_+}^2_{H^{\alpha/2}} = C \int_{\R^n} (\eta \theta_+) \fl{\alpha} \theta_+ \dx \\
&= \min_{v(x,0,t) = \eta(x,0,t) \theta(x,0,t)} \int_{\R^n \times (0,+\infty)} z^\eps |\grad v|^2 \dx \dd z \\
&\leq \int_{B^*_2} z^\eps |\grad (\eta \theta_+)|^2 \dx \dd z
\end{aligned}
\]
Recall that $\eta$ is supported inside $B_2$. Now we continue in the standard way as in \cite{CV2007}. For some small $\tilde \eps$, we write
\[ 
\abs{\int_{t_1}^{t_2} \int_{\R^n} \grad \eta^2 \cdot w \frac{\theta_+^2}{2} \dx \dd t }
\leq \tilde \eps \int_{t_1}^{t_2} \norm{\eta \theta_+}_{L^{\frac{2n}{n-\alpha}}} \dd t + \frac{C}{\tilde \eps} \int_{t_1}^{t_2} \norm{ \grad \eta \cdot w \theta_+ }_{L^{\frac{2n}{n+\alpha}}}^2 \dd t
\]

The first term is absorbed by the left hand side in \eqref{e:e1}. The second is bounded using H\"older's inequality:
\[ \frac{C}{\tilde \eps} \int_{t_1}^{t_2} \norm{ \grad \eta \cdot w \theta_+ }_{L^{\frac{2n}{n+\alpha}}}^2 \dd t \leq \frac{C}{\tilde \eps} \norm{w}_{L^\infty(t_1,t_2;L^{2n/\alpha}(B_2))}
\int_{t_1}^{t_2} \int_{B_2} | \grad \eta \ \theta_+ |^2 \dx \dd t\]
which finishes the proof.
\end{proof}

Now we show the De Giorgi isoperimetrical lemma with the weight $z^\eps$. This is a property of $H^1$ functions independently of the equation.

\begin{prop}[De Giorgi isoperimetrical lemma]
Let $w$ be a function in $H^1(B^*_1, z^\eps)$, we have the estimate
\begin{equation}\label{e:dgisop}
\left( \int_{\{w \leq 0\}} z^\eps \dd X \right) \left( \int_{\{w \geq 1\}} z^\eps \dd X \right) \leq C \left(\int_{\{0 < w < 1\}} z^\eps \dd X \right)^{1/2} \left(\int_{B_1^*} |\grad w| z^\eps \dd X \right)^{1/2}
\end{equation}
(Recall the notation $X = (X',X_{n+1}) = (x,z)$ with $X \in \R^{n+1}$ and $x =X' \in \R^n$).
\end{prop}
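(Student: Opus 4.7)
The plan is to adapt the classical De Giorgi argument to the $z^\eps$-weighted setting. Let $A = \{w\leq 0\}$, $B=\{0<w<1\}$, $C=\{w\geq 1\}$, and introduce the truncation $\tilde w := \max(\min(w,1),0)$, so that $\tilde w = 0$ on $A$, $\tilde w = 1$ on $C$, and $|\grad\tilde w|\leq|\grad w|$ with support in $\overline B$. The pointwise starting point is the fundamental theorem of calculus along the segment $Z_t := (1-t)X + tY$, which lies in the convex set $B_1^*$ for any $X\in A$, $Y\in C$: since $|Y-X|\leq 2\sqrt 2$,
$$1 \;=\; \tilde w(Y) - \tilde w(X) \;\leq\; 2\sqrt{2} \int_0^1 |\grad\tilde w(Z_t)|\,dt.$$
Integrating against the product weight $z_X^\eps z_Y^\eps\,dX\,dY$ over $A\times C$, and writing $\mu$ for the weighted measure $z^\eps\,dX$, yields
$$\mu(A)\,\mu(C) \;\leq\; C\int_0^1\int_A\int_C |\grad\tilde w(Z_t)|\,z_X^\eps z_Y^\eps\,dX\,dY\,dt.$$

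The weight on the right is handled by the elementary bound $z_X^\eps z_Y^\eps \leq z_{Z_t}^\eps$, valid whenever $X,Y\in B_1^*$ and $\eps\in[0,1]$: indeed $z_X,z_Y\in[0,1]$ forces $z_Xz_Y\leq \min(z_X,z_Y)\leq (1-t)z_X+tz_Y = z_{Z_t}$, and raising to the $\eps$-th power preserves the inequality. Splitting the $t$-integral at $1/2$, on $[1/2,1]$ I change variables $Y\mapsto Z_t$ at fixed $X$ (Jacobian $t^{-(n+1)}\leq 2^{n+1}$), and on $[0,1/2]$ I do the symmetric change $X\mapsto Z_t$ at fixed $Y$. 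Dropping the constraint $\chi_C$ (resp.\ $\chi_A$) and absorbing the Lebesgue measure of $A$ or $C$ into the constant (since $B_1^*$ is bounded) reduces the previous display to
$$\mu(A)\,\mu(C) \;\leq\; C \int_{B_1^*} |\grad\tilde w(Z)|\,z^\eps\,dZ.$$

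Since $\grad\tilde w$ is supported in $B$ and $|\grad\tilde w|\leq|\grad w|$, a final Cauchy-Schwarz yields
$$\int_{B_1^*} |\grad\tilde w|\,z^\eps \;\leq\; \mu(B)^{1/2}\left(\int_{B_1^*} |\grad w|^2 z^\eps\right)^{1/2},$$
which combined with the previous inequality gives the stated bound. The main technical point is the weight identity $z_X^\eps z_Y^\eps\leq z_{Z_t}^\eps$, which is specific to the half-ball normalization $z_X,z_Y\leq 1$ and to $\eps\geq 0$; without this one would have to invoke the $A_2$-Muckenhoupt machinery. Everything else is bookkeeping. An essentially equivalent route is to apply Cauchy-Schwarz in $t$ already pointwise, turning the fundamental-theorem bound into $1\leq C\bigl(\int_0^1\chi_B(Z_t)\,dt\bigr)^{1/2}\bigl(\int_0^1|\grad\tilde w(Z_t)|^2\,dt\bigr)^{1/2}$, followed by Cauchy-Schwarz in $(X,Y)$ after integration, which produces the factor $\mu(B)^{1/2}$ and the weighted $L^2$ norm of $\grad w$ in a single stroke.
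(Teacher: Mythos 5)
Your proof is correct and follows essentially the same strategy as the paper: truncate $w$, apply the fundamental theorem of calculus along the segment joining $X\in\{w\le 0\}$ to $Y\in\{w\ge 1\}$, use the key weight inequality $z_X^\eps z_Y^\eps\le z_Z^\eps$ for $Z$ on the segment (which is exactly what makes the unweighted argument survive the $z^\eps$ weight), and finish with Cauchy--Schwarz. The only cosmetic difference is the change of variables: the paper uses a polar-type substitution $Z=X+t\theta$, $Y=X+m\theta$ producing the Riesz kernel $|X-Z|^{-n}$ which is then integrated out, whereas you use the affine parametrization $Z_t=(1-t)X+tY$ split at $t=1/2$ with a bounded Jacobian; both are standard and equally valid. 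Incidentally, your final display has the factor $\bigl(\int|\grad w|^2 z^\eps\bigr)^{1/2}$, which is what the Cauchy--Schwarz step actually yields and what the paper's alternative formulation $\norm{w}_{\dot H^1(z^\eps)}$ indicates; the exponent on $|\grad w|$ in \eqref{e:dgisop} as printed appears to be a typo.
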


This estimate can also be written as
\[ |\{w \leq 0\}| \ |\{w \geq 1\}| \leq C |\{0< w <1\}|^{1/2} \norm{w}_{\dot H^1(z^\eps)} \]
where the measures of the sets are computed with respect to the weight $z^\eps$.

Note that \eqref{e:dgisop} is not scale invariant. If we replace $B_1^*$ by $B_r^*$, the constant $C$ would depend on $r$.

\begin{proof}
We can consider $\tilde w = \max(0,\min(1,w))$, so it is no loss of generality to assume $w(X) \in [0,1]$ for every $X$, so that $|\grad w|=0$ a.e. in $\{w\geq 0\}$ and $\{w \leq 0\}$.

Let $X$ be a point such that $w(X) = 0$ and $Y$ be such that $w(Y) = 1$. Let $\theta = \frac{Y-X}{|Y-X|}$, we compute
\[ Y_{n+1}^\eps \leq Y_{n+1}^\eps \int_0^{|Y-X|} |\grad w(X + t \theta)| \dd t \]

For a fixed value of $X$ we write
\begin{align}
Z &= X + t \theta \\
Y &= X + m \theta \\
\dd t \dd Y &= \frac{m^{n-1}}{t^{n-1}} \dd m \dd Z 
\end{align}

In order to estimate the second factor in the left hand side, we integrate in $Y$.
\begin{align*}
 \left( \int_{\{w(Y) = 1\}} Y_{n+1}^\eps \dd Y \right) &\leq 
\int_{\{w(Y) = 1\}} \int_0^{|Y-X|} Y_{n+1}^\eps |\grad w(X + t \theta)| \dd t \dd Y \\
&\leq \int_{\{0 < w(Z) < 1\}} \frac{|\grad w(Z)|}{|X-Z|^n} \int_{m_0}^{m1} m^{n-1} \left[ X + m \theta \right]_{n+1}^\eps \dd m \dd Z \\
&\leq C \int_{\{0 < w(Z) < 1\}} \frac{|\grad w(Z)|}{|X-Z|^n} [Y^*]_{n+1}^\eps \dd Z
\end{align*}
where $Y^*$ is the point on the line $X + m\theta$ such that $Y_{n+1}$ is maximum. Note that $\theta = \frac{Z-X}{|Z-X|}$.

Now we integrate in $X$.
\begin{align*}
 \left( \int_{\{w(X) = 0\}} X_{n+1}^\eps \dd X \right) & \left( \int_{\{w(Y) = 1\}} Y_{n+1}^\eps \dd Y \right) \\
  &\leq C \int_{\{w(X)=0\}} \int_{\{0 < w(Z) < 1\}} \frac{|\grad w(Z)|}{|X-Z|^n} [Y^*]_{n+1}^\eps X_{n+1}^\eps \dd Z \dd X
\end{align*}

The point $Y^*$ depends on $X$ and $Z$. In every case, $Z$ is on the line segment joining $X$ and $Y^*$, so either $X_{n+1} \leq Z_{n+1}$ or $Y^*_{n+1} \leq Z_{n+1}$. On the other hand $\max(X_{n+1},Y^*_{n+1}) \leq 1$ since $X,Y^* \in B_1^*$. Thus $[Y^*]_{n+1}^\eps X_{n+1}^\eps \leq Z_{n+1}^\eps$. Therefore
\begin{align*}
 \left( \int_{\{w(X) = 0\}} X_{n+1}^\eps \dd X \right) & \left( \int_{\{w(Y) = 1\}} Y_{n+1}^\eps \dd Y \right) \\ 
  &\leq C \int_{\{w(X)=0\}} \int_{\{0 < w(Z) < 1\}} \frac{|\grad w(Z)|}{|X-Z|^n} Z_{n+1}^\eps \dd Z \dd X\\
  &\leq C \int_{\{0 < w(Z) < 1\}} |\grad w(Z)| Z_{n+1}^\eps \dd Z \\
  &\leq C \left(\int_{\{0 < w < 1\}} Z_{n+1}^\eps \dd Z \right)^{1/2} \left(\int_{B_1^*} |\grad w(Z)| Z_{n+1}^\eps \dd Z \right)^{1/2}
\end{align*}
The last inequality follows by Cauchy-Schwartz since the support of $\grad w$ is included in $\{0<w<1\}$ (we are assuming $0 \leq w \leq 1$ in $B_1^*$). This finishes the proof.
\end{proof}

\bibliographystyle{plain}   
\bibliography{supercriticalqg}             

\begin{thebibliography}{1}

\bibitem{CS2007}
Luis Caffarelli and Luis Silvestre.
\newblock An extension problem related to the fractional {L}aplacian.
\newblock {\em Comm. Partial Differential Equations}, 32(7-9):1245--1260, 2007.

\bibitem{CV2007}
Luis Caffarelli and Alexis Vasseur.
\newblock Drift diffusion equations with fractional diffusion and the
  quasi-geostrophic equation.
\newblock {\em Annals of Mathematics}, To appear.

\bibitem{constantin2007rhc}
P.~Constantin and J.~Wu.
\newblock {Regularity of H{\"o}lder continuous solutions of the supercritical
  quasi-geostrophic equation}.
\newblock In {\em Annales de l'Institut Henri Poincare/Analyse non lineaire}.
  Elsevier, 2007.

\bibitem{MR1709781}
Peter Constantin and Jiahong Wu.
\newblock Behavior of solutions of 2{D} quasi-geostrophic equations.
\newblock {\em SIAM J. Math. Anal.}, 30(5):937--948 (electronic), 1999.

\bibitem{MR2084005}
Antonio C{\'o}rdoba and Diego C{\'o}rdoba.
\newblock A maximum principle applied to quasi-geostrophic equations.
\newblock {\em Comm. Math. Phys.}, 249(3):511--528, 2004.

\bibitem{kiselev2007gwp}
A.~Kiselev, F.~Nazarov, and A.~Volberg.
\newblock {Global well-posedness for the critical 2D dissipative
  quasi-geostrophic equation}.
\newblock {\em Inventiones Mathematicae}, 167(3):445--453, 2007.

\end{thebibliography}
\index{Bibliography@\emph{Bibliography}}%

\end{document}